\newcommand{\K}{\mathbb{K}}       
\newcommand{\R}{\mathbb{R}}       
\newcommand{\C}{\mathbb{C}}       
\newcommand{\HH}{\mathbb{H}}    
\newcommand{\cat}{\mathop{\mathrm{cat}}}     
\newcommand{\id}{\mathrm{id}} 
\newcommand{\diag}{\mathop{\mathrm{diag}}}   
\newcommand{\grad}{\mathop{\mathrm{grad}}}   
\newcommand{\Tr}{\mathop{\mathrm{Tr}}}  
\newcommand{\lcp}{\mathop{\mathrm{l.c.p.}}}
\begin{document}

\title{Trace map, Cayley transform and LS category of Lie groups
\thanks{Partially supported by FEDER and Research Project MTM2008-05861 MICINN Spain}
}

\titlerunning{Trace map, Cayley transform and LS category of Lie groups}        

\author{%
A.~G\'{o}mez-Tato
\and
E.~Mac\'{\i}as-Virg\'{o}s 
\and 
M.~J.~Pereira-S\'{a}ez 
}


\institute{%
		A.~G\'{o}mez-Tato \at
		Institute of Mathematics, Department of Geometry and 			Topology\\ University of Santiago de Compostela, 15782-			Spain \\
              	\email{antonio.gomez.tato@usc.es} 
\and 
		E.~Mac\'{\i}as-Virg\'{o}s \at  
		\email{quique.macias@usc.es}  
\and 
              	M.~J.~Pereira-S\'{a}ez \at 
              	\email{mariajose.pereira@usc.es}           
                            }

\date{}

\maketitle

\begin{abstract}
The aim of this paper is to use the so-called Cayley transform  to compute the LS category of  Lie groups and homogeneous spaces  by giving explicit categorical open coverings.  When applied to $U(n)$, $U(2n)/Sp(n)$ and $U(n)/O(n)$  this method is simpler than those formerly known.   We also show that the Cayley transform  is related to  height functions in Lie groups, allowing to give a local linear model of the set of critical points. As an application we give an explicit covering of $Sp(2)$ by  categorical open sets. The obstacles to generalize  these results to $ Sp(n)$ are  discussed.
\keywords{LS category \and Cayley transform \and unitary group \and symplectic group \and Bott-Morse function \and left eigenvalue}
\subclass{55M30 (Primary)\and 22E15, 58E05 (Secondary)}
\end{abstract}

\setcounter{tocdepth}{2} 
\tableofcontents

\section{Introduction}
\label{intro}

Lusternik-Schnirelmann category is a homotopical invariant that has been  widely studied \cite{TANRE,JAMES}.  For a topological space $X$, the LS category $\cat X$ is defined as the minimum number (minus one) of categorical open sets which are needed to cover $X$ (an open set is categorical when it is contractible in $X$).

Unfortunately the LS category  is very difficult to compute. For instance, while the result $\cat Sp(2)=3$ was proven  by P.~Schweitzer  in 1965 \cite{PAUL}, it was not until 2002 that  $\cat Sp(3)=5$ appeared in \cite{TATO}, see also \cite{IWASEMIMURA}.  In general, the algebraic techniques involved are highly elaborated.  It is then of interest to introduce more elementary methods.

The main idea of this paper is to compute the LS category of some Lie groups and homogeneous spaces of the orthogonal type by means of the so-called Cayley transformation. This will give proofs ---which are simpler than the original ones---, of $\cat U(n)=n$ (by  W.~Singhof \cite{SINGHOF}), $\cat U(n)/Sp(n)=n$ and  $\cat U(n)/O(n)=n$ (by M.~Mimura and K.~ÊSugata \cite{MIMURASUGATA}). 

Our method is closely related to  Morse theory on Lie groups. Classically, the  functions that use to be considered are ``height" or ``distance" as in \cite{DUAN,VD,VOLKOZ}. On a matrix Lie group $G$  these functions are, up to a constant, of the form $h_X(A)=\Re\Tr(XA)$,  the real part of the trace, for some matrix $X$, a fact which allows to explicitly describe the Bott-Morse structure of these functions. We shall prove that the Cayley transform serves to linearize the gradient flow of $h_X$ and to give local charts for the set of critical points. These results generalize those of K.~Y. Volchenko and A.~N. Kozachko (\cite{VOLKOZ}, see also \cite{VD}).

Let us remember that in a compact manifold the LS category (plus one) is a lower bound for the number of critical points of any smooth function (Morse or not).
The reason is that --roughly speaking-- for each critical point the gradient flow defines a categorical open set. What is nice in our setting is that this flow is given by the contraction associated to the Cayley map. 
 
Moreover, the Morse interpretation above allows us to give an explicit covering of $ Sp(2)$ by four categorical open sets,  a result that completes the abstract proof by  Schweitzer  \cite{PAUL}.

At the end of the paper we explain how the generalization of our results to the symplectic group $ Sp(n)$  depends on the computation of the so-called left eigenvalues of a quaternionic matrix \cite{ZHANG2},  a topic about which very little is known, out of the case $n=2$. 
 
We hope that the ideas presented here will deserve further attention.


\section{The Cayley transform}\label{CAYLEYTRANSFORM}

The classical Cayley transform was introduced by A.~Cayley in 1846 \cite{CAYLEY}, as a way to express an orthogonal transformation by means of skew-symmetric coordinates. It is given by
$$c(X)={I-X\over I+X}.$$ 
This map is defined for all matrices having their eigenvalues different from $-1$ and equals its own inverse, $c^2=\id$. It can be thought as a generalization of the stereographic projection.

Its basic properties appear in \cite{POSTNIKOV}, see also \cite{WEYL}.

In order to obtain a categorical covering of the orthogonal Lie groups we shall introduce in the next paragraphs a convenient generalization of the classical Cayley map.

\subsection{Preliminaries}

Let  the algebra $\K$ be either $\R$ (reals), $\C$ (complex) or $\HH$ (quaternions).
We say that the matrix $A\in\mathcal{M}(n,\K)$ is {\em orthogonal} if $AA^*=\id$, where $A^*=\bar{A^t}$ is the conjugate transpose. Such a matrix can be identified with a (right) $\K$-linear map $\K^n \to \K^n$ preserving the product $\langle v,w\rangle=v^*w$.
Let us denote  by
$G=O(n,\K)$ the  Lie group of orthogonal matrices. 
Depending on $\K$ this group corresponds to the orthogonal group $O(n)$, the unitary group $U(n)$ or the symplectic group $ Sp(n)$.

\begin{remark} The Cayley transform maps a   classic orthogonal Lie group like $G=U(n)$ or $Sp(n)$    into its Lie algebra ${\mathfrak g}$ of skew-hermitian matrices. In fact, suppose that $A$ is a unitary or symplectic matrix. It can be diagonalized,
$A=UDU^*$, to a {\em complex} diagonal matrix $D=\diag(\lambda_1,\dots,\lambda_n)$ \cite{BRENNER}. Then $$c(A)=U\diag(\pi(\lambda_1),\dots,\pi(\lambda_n))U^*,$$ where $\pi$ is the stereographic projection $\pi\colon S^1\backslash \{-1\}\to \R$.
\end{remark}

\subsection{Generalized Cayley transform}

Let $A\in O(n,\K)$ be an orthogonal matrix, where $\K$ is $\R$, $\C$ or $\HH$.

\begin{definition}  Let us denote by $\Omega(A)\subset \mathcal{M}(n,\K)$ the open set  of matrices $X$ such that $A+X$ is invertible. The \emph{Cayley transform centered at    $A$} is the map $$c_A\colon \Omega(A) \to \Omega(A^*)$$ given by
$$c_A(X)=(I-A^*X)(A+X)^{-1}.$$
\end{definition}

The classical Cayley map corresponds to $A=I$.
As we shall see in the next Proposition, the application $c_A$ is well defined  and it is invertible, with
 $c_A^{-1} = c_{A^*}$.

\begin{proposition}\label{DEFIN}
 If $X\in \Omega(A)$ then
\begin{enumerate}
\item
$c_A(X)=(A+X)^{-1}(I-XA^*)$;
\item
the inverse matrix of $A^*+c_A(X)$ is $(1/2)(A+X)$;
\item
if $X\in\Omega(A)$ then $c_A(X)\in\Omega(A^*)$;
\item
$c_A$ is a diffeomorphism, with $c_A^{-1} = c_{A^*}$.
\end{enumerate}
\end{proposition}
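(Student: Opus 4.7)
My plan is to derive all four parts from the single defining relation $AA^* = A^*A = I$ together with direct manipulation of the defining formula for $c_A$. Since $\K$ may be the noncommutative algebra $\HH$, I would take care with the order of factors throughout, but the real scalar $1/2$ appearing in (2) commutes freely and so causes no difficulty.

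For (1), the plan is to clear denominators. The two candidate expressions for $c_A(X)$ agree if and only if $(A+X)(I - A^*X) = (I - XA^*)(A+X)$. Expanding each product and using $AA^*X = X$ on the left-hand side and $XA^*A = X$ on the right-hand side, both collapse to $A - XA^*X$. Note that (1) is non-trivial precisely because in the quaternionic case $A+X$ need not commute with its neighboring factor; it is the pair of orthogonality relations that forces the equality.

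For (2), I would compute the product $(A^* + c_A(X))(A+X)$ directly. The second summand contributes $c_A(X)(A+X) = I - A^*X$ by the very definition of $c_A$, while the first contributes $A^*A + A^*X = I + A^*X$. The sum is $2I$, which gives the stated inverse after dividing by $2$. Computing the symmetric product $(A+X)(A^* + c_A(X))$ in the same way, using the formula from (1), yields $2I$ as well, so the inverse is two-sided. Part (3) is then immediate: since $A^* + c_A(X)$ is invertible in $\mathcal{M}(n,\K)$, by definition $c_A(X) \in \Omega(A^*)$.

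For (4), I would verify $c_{A^*}(c_A(X)) = X$ by substitution. The expression for $c_{A^*}(Y)$ is $(I - AY)(A^* + Y)^{-1}$; setting $Y = c_A(X)$, part (2) replaces $(A^* + c_A(X))^{-1}$ by $(1/2)(A+X)$, and the identity $A(I - A^*X) = A - X$ lets me rewrite $I - A\,c_A(X) = 2X(A+X)^{-1}$. The composition then telescopes to $X$. The reverse composition $c_A \circ c_{A^*} = \id$ follows formally because the roles of $A$ and $A^*$ are interchangeable in the whole setup. Smoothness of $c_A$ is clear since it is a rational matrix function with non-vanishing denominator on $\Omega(A)$, and the same for its inverse, so $c_A$ is a diffeomorphism onto $\Omega(A^*)$. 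The only real obstacle is the noncommutative bookkeeping in (1) and (4); there is no analytic subtlety because every formula is polynomial in $X$, $A$, $A^*$ times one inversion.
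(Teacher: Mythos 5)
Your proposal is correct and follows essentially the same route as the paper: part (1) by clearing denominators and using $AA^*=A^*A=I$, part (2) by multiplying $A^*+c_A(X)$ against $A+X$ and observing the cross terms cancel, part (3) as a corollary, and part (4) by substituting the formula from (2) into $c_{A^*}\circ c_A$. The only differences are cosmetic: you additionally check the two-sidedness of the inverse in (2), which is automatic in a finite-dimensional $\R$-algebra, and you factor the telescoping in (4) as $2X(A+X)^{-1}\cdot(1/2)(A+X)$ rather than expanding the product all at once as the paper does.
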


\begin{proof}
(1)
It suffices to verify that   $(A+X)(I-A^*X)= (I-XA^*)(A+X)$, which is immediate because $AA^*=\id$.
For (2) we compute
$$\left(A^*+(I-A^*X)(A+X)^{-1}\right)(1/2)(A+X)=(1/2)\left(A^*A+A^*X+I-A^*X\right)= I .$$
Part (3) comes immediately from (2).
Finally, by using (2) we obtain that 
\begin{eqnarray*}
          (c_{A^*}\circ c_A)(X) &=&  \\
          (I-Ac_A(X))(1/2)(A+X) &=&  \\
          \left(I-A(I-A^*X)(A+X)^{-1}\right)(1/2)(A+X) &=&  \\
          (1/2)\left((A+X)-A+AA^*X\right) &=& X.
        \end{eqnarray*}
\qed
\end{proof} 
We shall need the following interesting properties, which are easy to prove:

\begin{proposition}\label{PROP2}
Let $X\in \Omega(A)$. Then
\begin{enumerate}
\item
 $X^* \in \Omega(A^*)$ and $c_{A^*}(X^*)=c_A(X)^*$;
\item
$UXU^*\in\Omega(UAU^*)$ for any matrix $U\in O(n,\K)$ and
$$c_{UAU^*}(UXU^*)=Uc_A(X)U^*;$$
\item
if the matrix $X$ is invertible then $X^{-1} \in \Omega(A^*)$
and $$c_{A^*}(X^{-1}) = -Ac_A(X)A .$$
\end{enumerate}
\end{proposition}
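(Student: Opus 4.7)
All three parts ask me to transport the invertibility of $A+X$ through a natural operation and then verify a formula; in each case I would pivot between the two equivalent expressions for the Cayley transform given by Proposition \ref{DEFIN}(1).

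For (1) I would simply take the adjoint of $c_A(X) = (I - A^*X)(A+X)^{-1}$. Since $(A+X)^{*} = A^{*}+X^{*}$, the hypothesis that $A+X$ is invertible immediately gives $X^{*}\in\Omega(A^{*})$, and taking adjoints yields $c_A(X)^{*} = (A^{*}+X^{*})^{-1}(I - X^{*}A)$, which is exactly $c_{A^{*}}(X^{*})$ written in its second form. Part (2) is equally direct: $UAU^{*}+UXU^{*} = U(A+X)U^{*}$ is invertible because $U$ is orthogonal, and substituting into the definition of $c_{UAU^{*}}(UXU^{*})$, together with $(UAU^{*})^{*} = UA^{*}U^{*}$ and $U^{*}U = I$, produces $U(I-A^{*}X)(A+X)^{-1}U^{*} = Uc_A(X)U^{*}$. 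Both computations are routine bookkeeping.

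Part (3) is where I expect the real work, and its heart is the right factorization of $A^{*} + X^{-1}$. Using $AA^{*} = I$ I would write
\[
A^{*} + X^{-1} = X^{-1}(I + XA^{*}) = X^{-1}(A+X)A^{*},
\]
a product of three invertible matrices, whence $X^{-1}\in\Omega(A^{*})$ and $(A^{*}+X^{-1})^{-1} = A(A+X)^{-1}X$. Plugging this into the second form
\[
c_{A^{*}}(X^{-1}) = (A^{*}+X^{-1})^{-1}(I - X^{-1}A)
\]
collapses to $A(A+X)^{-1}(X-A)$. Independently, expanding the right-hand side via the first form of $c_A$ gives
\[
-A\,c_A(X)\,A = -A(A+X)^{-1}(I - XA^{*})A = -A(A+X)^{-1}(A-X),
\]
after using $A^{*}A = I$, and this equals $A(A+X)^{-1}(X-A)$. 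The two expressions coincide, finishing the proof.

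The main obstacle is spotting the factorization of $A^{*} + X^{-1}$ in (3); once that is in hand, the rest is a matter of keeping the two dual formulas for $c_A$ straight and applying $AA^{*} = A^{*}A = I$ at the appropriate places.
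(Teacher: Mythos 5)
Your proof is correct in all three parts. The paper itself gives no proof of Proposition~\ref{PROP2} --- it is stated as a collection of properties ``which are easy to prove'' --- so there is no paper argument to compare against; but your computations are the natural ones, and the factorization $A^*+X^{-1}=X^{-1}(A+X)A^*$ in part~(3) is exactly the right observation to make the invertibility and the final identity fall out in a few lines.
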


\subsection{Categorical open sets}
The results in this paragraph show that the domain of the Cayley transform in an orthogonal group is contractible.

The Lie algebra of  $G=O(n,\K)$  is formed by the skew-symmetric (resp. skew-hermitian) matrices,
$$\mathfrak{g}=\mathfrak{o}(n,\K)=\{X\in\mathcal{M}(n,\K) \colon X+X^*=0\}.$$
As a vector space $\mathfrak{g}= T_IG$, so the tangent space at any other point $A\in G$ is
$$T_AG =L_A(T_IG)=\{Y\in\mathcal{M}(n,\K)\colon A^*Y+ Y^*A=0\}.$$

\begin{proposition}\label{noautovreales}
Let $X\in\mathfrak{o}(n,\K)$ be a skew-symmetric (resp. skew-hermitian) matrix. Then $X$ has not real eigenvalues different from zero.
\end{proposition}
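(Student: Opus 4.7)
The plan is to suppose for contradiction that $Xv = \lambda v$ with $v \neq 0$ and $\lambda \in \R$, and then extract a contradiction by computing the ``Hermitian form'' $v^{*}Xv$ in two different ways. Since $\lambda$ is \emph{real}, it commutes with every element of $\K$, so there is no ambiguity between left and right eigenvectors even in the quaternionic case; this lets me handle the three algebras uniformly.

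First I would compute
\[
v^{*}Xv \;=\; v^{*}(\lambda v) \;=\; \lambda\,v^{*}v \;=\; \lambda\|v\|^{2},
\]
which is a \emph{real} scalar. Next, using the hypothesis $X^{*}=-X$, I compute
\[
(v^{*}Xv)^{*} \;=\; v^{*}X^{*}v \;=\; -\,v^{*}Xv,
\]
so the scalar $v^{*}Xv\in\K$ is ``pure imaginary'' in the sense of $\K$: it has zero real part (for $\K=\R$ this means it is $0$; for $\K=\C$ and $\K=\HH$ it means its $\K$-conjugate is its negative).

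Combining the two observations, $\lambda\|v\|^{2}$ is simultaneously a real number and a scalar with vanishing real part, hence $\lambda\|v\|^{2}=0$. Since $v\neq 0$ we have $\|v\|^{2}>0$, and therefore $\lambda=0$, contradicting the assumption.

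I do not expect any real obstacle here; the only point that requires care is the quaternionic case, where the eigenvalue theory is genuinely subtle (as the authors flag later in their discussion of left eigenvalues). The potential pitfall is absorbed by the hypothesis ``$\lambda$ real'', which makes the eigenvalue equation $Xv=\lambda v=v\lambda$ unambiguous and preserves the scalar identity $(v^{*}Xv)^{*}=v^{*}X^{*}v$. The same argument works verbatim over $\R$, $\C$ and $\HH$.
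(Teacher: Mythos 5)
Your argument is correct and is essentially identical to the paper's proof: both compute $v^{*}Xv$ first as $\lambda\|v\|^{2}$ (a real number) and then, using $X^{*}=-X$, as its own negative under conjugation, forcing $\lambda\|v\|^{2}=0$. The only cosmetic difference is that the paper writes the eigenvalue equation as $Xv=vt$ with the scalar on the right, whereas you write $Xv=\lambda v$ and explicitly note that a real scalar is central in $\K$; both conventions yield the same computation.
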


\begin{proof} 
Suppose that there exists $t\in\R$ such that $Xv=vt$ for some
$v\in\K^n$, $v\neq 0$. Then $v^{*}Xv=v^{*}vt=\vert v\vert^{2}t$ is a real number and, consequently,
$$v^*Xv = (v^*Xv)^* = v^*X^*v = v^*(-X)v =-v^*Xv.$$ Therefore $v^{*}Xv$ is  null. \textit{i.e.} $
\vert v \vert^2 t=0$ hence $t=0$.
\qed\end{proof}

\begin{corollary}\label{TANCONT} The real vector space  
 $T_{A}G$ of the matrices $Y$ such that
$A^*Y+Y^*A=0$ is contained in $\Omega(A)=\{Y\in \mathcal{M}(n,\K) \colon \, A+Y \mathrm{\ is\ invertible}\}$.
\end{corollary}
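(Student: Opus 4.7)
The plan is to reduce invertibility of $A+Y$ to a statement about eigenvalues of a skew-hermitian matrix, so that Proposition \ref{noautovreales} can be applied directly. Since $A$ is orthogonal, hence invertible, we can factor
\[
A+Y = A(I + A^*Y),
\]
so $A+Y$ is invertible if and only if $I + A^*Y$ is invertible, i.e.\ if and only if $-1$ is not an eigenvalue of $A^*Y$ (in the right-eigenvector sense $A^*Y v = v(-1) = -v$, which is what matters over $\HH$ as well).

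The key observation is that the defining condition $A^*Y + Y^*A = 0$ for $Y\in T_AG$ says exactly that $A^*Y$ is skew-hermitian: indeed, $(A^*Y)^* = Y^*A$, so
\[
(A^*Y)^* + A^*Y = Y^*A + A^*Y = 0,
\]
that is, $A^*Y \in \mathfrak{o}(n,\K)$. By Proposition \ref{noautovreales}, the matrix $A^*Y$ has no nonzero real eigenvalues; in particular $-1$ is not an eigenvalue, so the equation $(I + A^*Y)v = 0$ forces $v = 0$.

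Therefore $I + A^*Y$ is invertible, and so is $A + Y$, giving $Y\in \Omega(A)$. There is no real obstacle here: once one sees that $A^*Y$ is the skew-hermitian object to which the previous proposition applies, the rest is routine. The only minor care needed is the quaternionic case, where \emph{invertibility} of a square matrix still follows from triviality of the right kernel, which is exactly what the argument above provides.
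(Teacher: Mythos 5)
Your proof is correct and follows essentially the same route as the paper's: both reduce the question to the observation that $A^*Y$ is skew-hermitian (which is exactly the defining condition for $T_AG$) and then invoke Proposition \ref{noautovreales} to rule out $-1$ as an eigenvalue. The only cosmetic difference is that you make the factorization $A+Y = A(I+A^*Y)$ explicit, whereas the paper argues by contradiction from a hypothetical nonzero kernel vector; these are the same argument.
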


\begin{proof}
If $A+Y$  is not invertible then there exists $v\neq 0$ such that $Yv=-Av$ so $A^*Yv=-v$. This means that the skew-symmetric matrix $A^*Y$ has $-1$ as an eigenvalue,  contradicting Proposition \ref{noautovreales}.\qed \end{proof}

Let $G=O(n,\K).$ We shall denote by $\Omega_G(A)$ the open subset $\Omega(A)\cap G \subset G.$

\begin{theorem} \label{CONTRACTIBLE}
The generalized Cayley transform $c_A$ maps diffeomorphically  $\Omega_G(A)$ onto  $T_{A^*}G$, with $c_A(A)=0$. As a consequence, the open set $\Omega_G(A)$ is contractible.
\end{theorem}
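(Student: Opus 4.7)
The plan is to verify, in order: that $c_A(A)=0$; that $c_A$ carries $\Omega_G(A)$ into $T_{A^*}G$; that $c_{A^*}$ carries $T_{A^*}G$ into $\Omega_G(A)$; and then to combine these with Proposition \ref{DEFIN}(4) to obtain the desired diffeomorphism, whence contractibility follows at once from the fact that $T_{A^*}G$ is a real vector space.

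The first point is a one-line computation using $A^*A=I$, namely $c_A(A)=(I-A^*A)(2A)^{-1}=0$. For the second, take $X\in\Omega_G(A)$, so that $X^{-1}=X^*$. Proposition \ref{PROP2}(1) gives $c_{A^*}(X^*)=c_A(X)^*$, while Proposition \ref{PROP2}(3), applied to the invertible matrix $X$, gives $c_{A^*}(X^{-1})=-A\,c_A(X)\,A$. Equating the two expressions for $c_{A^*}(X^*)=c_{A^*}(X^{-1})$ and right-multiplying by $A^*$ yields $A\,c_A(X)+c_A(X)^*A^*=0$, which is exactly the condition defining $T_{A^*}G$.

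The third point is the only step that requires genuine computation. Given $Y\in T_{A^*}G$, Corollary \ref{TANCONT} places $Y$ in $\Omega(A^*)$, so $Z:=c_{A^*}(Y)$ belongs to $\Omega(A)$ by Proposition \ref{DEFIN}(3). Writing $Z^*$ via Proposition \ref{PROP2}(1) gives
\[ZZ^*=(I-AY)\bigl[(A+Y^*)(A^*+Y)\bigr]^{-1}(I-Y^*A^*).\]
The tangency relation $AY+Y^*A^*=0$ together with $A^*A=AA^*=I$ makes both products $(A+Y^*)(A^*+Y)$ and $(I-Y^*A^*)(I-AY)$ collapse to the same expression $I+Y^*Y$. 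Substituting the latter identity in the form $I-AY=(I-Y^*A^*)^{-1}(I+Y^*Y)$ into the display above then gives $ZZ^*=I$ by immediate cancellation; the matrix $I+Y^*Y$ is invertible because $v^*(I+Y^*Y)v=|v|^2+|Yv|^2>0$ for $v\neq 0$ in each of the three cases $\K\in\{\R,\C,\HH\}$.

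The main obstacle is the orthogonality check in the third step: a brute-force expansion of $ZZ^*$ is unenlightening, and the key trick is to exploit the tangency relation twice, to factor both numerator and denominator through the common quantity $I+Y^*Y$. Once this is done, the preceding steps combine with Proposition \ref{DEFIN}(4) to show that $c_A$ restricts to a smooth bijection $\Omega_G(A)\to T_{A^*}G$ whose inverse $c_{A^*}$ is also smooth, hence a diffeomorphism, and $\Omega_G(A)$ inherits contractibility from the vector space $T_{A^*}G$.
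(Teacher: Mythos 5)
Your proof is correct and supplies exactly the verification that the paper leaves implicit: the paper simply declares the theorem an immediate consequence of Propositions \ref{DEFIN}, \ref{PROP2}, \ref{noautovreales} and Corollary \ref{TANCONT}, and you carry out the details using precisely those ingredients, with the same two-sided inclusion argument ($c_A(\Omega_G(A))\subseteq T_{A^*}G$ via Proposition \ref{PROP2}, $c_{A^*}(T_{A^*}G)\subseteq\Omega_G(A)$ via the orthogonality computation). The only point you leave tacit is the invertibility of $I-Y^*A^*$ needed to perform the final cancellation, but this is automatic from the factorization $(I-Y^*A^*)(I-AY)=I+Y^*Y$ together with the invertibility of $I+Y^*Y$ that you establish, or more directly from Corollary \ref{TANCONT} applied to $-Y^*\in T_AG$, which makes $A-Y^*$ and hence $I-Y^*A^*=(A-Y^*)A^*$ invertible.
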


The proof is an immediate consequence of Propositions \ref{DEFIN}, \ref{PROP2}  , \ref{noautovreales}
and Corollary~\ref{TANCONT}.

\section{Bott-Morse functions on Lie groups}\label{MORSE}
There is a deep relationship between the Cayley transform and  Morse theory in Lie groups.

In this Section \ref{MORSE} we prove   (Proposition \ref{FLUJO}) that one can integrate the gradient flow of any height function $h_X\colon G \to \R$ by applying the Cayley transform $c_{A^*}$ to a simple curve 
in $T_{A^*}G$, provided that  $c_{A^*}(0)=A$ is a critical point. As a consequence we give a local model for the set $\Sigma$ of critical points of $h_X$ (Theorem \ref{MODEL}).

\subsection{Critical points of a height function}
Let $G=O(n,\K)$ be an orthogonal group embedded in the euclidean space $E=\mathcal{M}(n,\K)$. The euclidean metric is given by
$\langle A,B\rangle =\Re\Tr(A^*B)$, the real part of the trace. I.~A. Dynnikov and A.~P. Veselov \cite{VD} and Volchenko and Kozachko \cite{VOLKOZ} studied the height functions on $G$ (height with respect to some hyperplane), while H.~Duan \cite{DUAN} studied the distance  functions on $G$ (distance to a given point). Up to a constant, both classes of functions are given by the formula 
$$h_X(A)=\Re\Tr(XA),$$ for some matrix $X\in E$, as it is easy to prove. 

\begin{example} \label{ptos criticos diagonales}
When 
$X=\diag(\lambda_1,\dots,\lambda_n)$ is a positive real diagonal matrix, with $\lambda_1<\cdots <\lambda_n$, the function $h_X$ is a perfect Morse function, whose  critical points  are the diagonal matrices 
 $$\diag(\varepsilon_1,\dots,\varepsilon_n), \quad \epsilon_k=\pm 1.$$
This result is proven in  \cite{DUAN} and \cite{VD},
see also \cite{MJP}.
\end{example}

\begin{example}\label{EJFRANK}
On the other hand, the case  $X=tI$, $t\in\R$, was first studied by T.~Frankel in \cite{FRANKEL}. This time, the height map  is a Bott-Morse function, invariant by the adjoint action. The set $\Sigma(n)$ of critical points is formed by the matrices $A$ such that $A^2=I$. 
\end{example} 


 The two preceding examples are particular cases of the next Theorem, that we suppose  is more or less folk. It gives a general description of the set of critical points of an arbitrary height function $h_X$.

 A direct computation shows that the gradient of $h_X$ on $G$ is given by
$$(\grad h_X)_A=\frac{1}{2}(X^*-AXA).$$
Moreover, if 
 $A\in G$ is a critical point, then the Hessian operator $(H h_x)_A\colon T_AG \to T_AG$ is given by
$$(H h_X)_A(U)=-\frac{1}{2}(AXU+UXA), \quad U\in T_AG.$$




We keep the notation of Example \ref{EJFRANK}.

\begin{theorem}\label{GENERICO}
 Let $X\in E=\mathcal{M}(n,\K)$ be an arbitrary matrix. Let $n_0$ be the dimension of its kernel and let $0< t_1<\dots < t_k$ be the non-null (real)  eigenvalues of $XX^*$, with multiplicities $n_1,\dots,n_k$, that is $n_0+n_1+\cdots+n_k=n$. Then the set  of critical points of the height function $h_X$ is 
 $$\Sigma(h_X)\cong O(n_0,\K)\times\Sigma(n_1)\times\cdots\times\Sigma(n_k) \quad \mathrm{(diffeomorphism)}.$$
\end{theorem}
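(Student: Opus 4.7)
The plan is to reduce to the case of a real nonnegative diagonal $X$ via the singular value decomposition, and then carry out an eigenspace argument for the critical equation.

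First I would simplify the critical equation. From the formula $(\grad h_X)_A = \frac{1}{2}(X^* - AXA)$, vanishing of the gradient at $A \in G$ is $X^* = AXA$; right-multiplying by $A^*$ and using $AA^* = I$ gives $X^*A^* = AX$, so \emph{$A$ is critical iff $XA$ is self-adjoint, equivalently $AXA = X^*$}.

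Next I would use a singular value decomposition $X = UDV^*$ with $U, V \in O(n,\K)$ and
$$D = \diag\bigl(0_{n_0},\,\sqrt{t_1}\,I_{n_1},\,\ldots,\,\sqrt{t_k}\,I_{n_k}\bigr),$$
since the squared singular values of $X$ are the eigenvalues of $XX^*$; the SVD with real nonnegative singular values is available over all three algebras $\R,\C,\HH$. By cyclicity of the trace, setting $B := V^*AU$ one has
$$h_X(A) = \Re\Tr(UDV^*A) = \Re\Tr(D\,V^*AU) = h_D(B),$$
and the map $A \mapsto V^*AU$ is a diffeomorphism of $G$; hence $\Sigma(h_X)$ is carried onto $\Sigma(h_D)$ and it suffices to describe the latter.

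Now let $V_0,V_1,\ldots,V_k \subseteq \K^n$ be the eigenspaces of $D$ (so $\dim V_i = n_i$). For $B \in G$ the critical equation becomes $BDB = D$, hence $C := DB$ is Hermitian and satisfies $C^2 = D(BDB) = D^2$. Since $C$ commutes with $C^2 = D^2$, and $D^2$ has distinct eigenvalues $0 < t_1 < \cdots < t_k$ on the $V_i$, the Hermitian operator $C$ preserves each $V_i$. On $V_0$, Hermiticity together with $C^2|_{V_0} = 0$ forces $C|_{V_0} = 0$; from $D(Bv) = Cv = 0$ for $v \in V_0$ it follows $Bv \in \ker D = V_0$, so $B$ preserves $V_0$, and by orthogonality also $V_0^\perp$. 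On each $V_i$ with $i \geq 1$, $D$ acts as $\sqrt{t_i}\,I$, so $B|_{V_i} = (1/\sqrt{t_i})\,C|_{V_i}$ is Hermitian with square $(1/t_i)\,C^2|_{V_i} = I$, i.e., $B|_{V_i} \in \Sigma(n_i)$. Conversely, any $B$ that is block diagonal with $B|_{V_0} \in O(n_0,\K)$ arbitrary and $B|_{V_i} \in \Sigma(n_i)$ automatically satisfies both $BB^* = I$ and $BDB = D$, which gives the claimed product decomposition.

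The hard part, if any, is invoking SVD over $\HH$ (classical but not standard in the LS-category literature); once that is granted the proof reduces to the eigenspace-preservation property of Hermitian matrices, which is routine. A subtlety worth being careful about is that the critical equation alone does \emph{not} directly force the off-diagonal blocks $B_{ij}$ ($i \neq j$, $i,j \geq 1$) to vanish; one really needs the combination $(DB)^2 = D^2$ plus Hermiticity of $DB$ to conclude block-diagonality via the eigenspace argument.
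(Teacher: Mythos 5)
Your proof is correct and follows essentially the same plan as the paper's: the SVD you invoke packages the paper's polar-decomposition-plus-unitary-diagonalization reduction into a single step, and your eigenspace analysis of $C=DB$ spells out the ``direct computation'' that the authors leave implicit. The only point worth one more word is why $B$ preserves each $V_i$ for $i\geq 1$ (needed to make sense of $B|_{V_i}$): since $B$ preserves $V_0^\perp$, $C$ preserves $V_i$, and $D$ is invertible on $V_0^\perp$ with $D(V_i)=V_i$, one gets $Bv=D^{-1}Cv\in V_i$ for $v\in V_i$.
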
 

\begin{proof}  
 First, if $Y=UDU^*$, then $\Sigma(h_Y)=U\Sigma(h_D)U^*$. On the other hand, if $X=US$ is a polar decomposition ($U$ orthogonal, $S$ hermitian) then $\Sigma(h_X)=\Sigma(h_S)U^*$. Finally, if $C=\diag(-I_p,+I_q)$ then $\Sigma(h_Y)=C\Sigma(h_{YC})$. These properties prove that we can restrict ourselves to the case where $X$ is given by diagonal blocks
  $$
\left(\begin{array}{c|c|c|c}0 &&  &   \\\hline
&t_1I &   &  \\\hline 
& & \ddots &  \\
 \hline  && & t_kI
 \end{array}\right), \quad  0<t_1<\dots<t_k,
 $$
  of size $n_0,n_1,\dots,n_k$. The rest of the proof is a direct computation. 
 \qed\end{proof}

The transformations in the proof of Theorem \ref{GENERICO} preserve the non-degenerate critical points, so we have the following corollary which completely characterizes the height functions which are Morse functions.

\begin{corollary}
The height  function $h_X$ is Morse if and only if the matrix $XX^*$ is invertible and has $n$ different eigenvalues.
\end{corollary}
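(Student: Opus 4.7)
The plan is to invoke the observation (stated immediately before the corollary) that the three reductions used in the proof of Theorem \ref{GENERICO} preserve non-degeneracy of critical points, and then to read off the equivalence from the resulting block-diagonal normal form for $X$.

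First I would note that each reduction in the proof of Theorem \ref{GENERICO} writes $h_X$ as $h_{X'}\circ\psi$, where $\psi\colon G\to G$ is a diffeomorphism: conjugation $A\mapsto U^*AU$ for the step $Y=UDU^*$, right translation $A\mapsto AU$ for the polar decomposition $X=US$, and left translation $A\mapsto CA$ for the sign-matrix step. Diffeomorphisms of $G$ preserve the Morse property, so $h_X$ is Morse if and only if the reduced height function is. Hence one may assume
$$X=\diag\bigl(0_{n_0},\, t_1 I_{n_1},\ldots, t_k I_{n_k}\bigr),\qquad 0<t_1<\cdots<t_k.$$
In this normal form the eigenvalues of $XX^*$ are $0$ (multiplicity $n_0$) and $t_j^2$ (multiplicity $n_j$), so the hypothesis of the corollary translates into $n_0=0$ and $n_j=1$ for every $j\geq 1$, that is, $X=\diag(t_1,\ldots,t_n)$ with strictly increasing positive entries.

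For the ``if'' direction this is precisely the situation of Example \ref{ptos criticos diagonales}, where $h_X$ is already known to be a (perfect) Morse function. For the ``only if'' direction I would argue contrapositively: if either $n_0\geq 1$ or some $n_j\geq 2$, then Theorem \ref{GENERICO} exhibits a positive-dimensional factor in
$$\Sigma(h_X)\cong O(n_0,\K)\times\Sigma(n_1)\times\cdots\times\Sigma(n_k),$$
so the critical set fails to be discrete and $h_X$ cannot be Morse.

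The main technical point to handle is verifying the positive-dimensionality of the relevant factors. Concretely, $\Sigma(n_j)=\bigsqcup_{p} G(p,n_j;\K)$ contains a Grassmannian of dimension $p(n_j-p)\dim_\R\K>0$ whenever $n_j\geq 2$, while $O(n_0,\K)$ is itself positive-dimensional as soon as $n_0\geq 1$ in the unitary and symplectic cases. Once these dimension counts are in place, the corollary falls out of the two earlier results with no further calculation required.
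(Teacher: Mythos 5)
Your proof follows the paper's route exactly: reduce to the block-diagonal normal form via the three diffeomorphisms (conjugation, right translation, left translation by the sign matrix), then read the answer off from Theorem~\ref{GENERICO} and Example~\ref{ptos criticos diagonales}. The paper's own ``proof'' is just the one-sentence remark that precedes the corollary, so your write-up is in fact more complete. One genuine difference in emphasis: for the ``only if'' direction you argue from discreteness of the critical set rather than from non-degeneracy (which is what the paper's remark points to). Both work, and discreteness reads off directly from Theorem~\ref{GENERICO}, which is arguably cleaner.

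The hedge ``in the unitary and symplectic cases'' is not a loose end you can leave implicit, though -- it flags a real problem. For $\K=\R$ the corollary as stated is actually false when $n_0=1$ and all $n_j=1$: take $X=\diag(0,1,\ldots,n-1)$ on $O(n)$. Then $\Sigma(h_X)\cong O(1,\R)\times\{\pm1\}^{n-1}=\{\pm1\}^n$ is finite, and the kernel of the Hessian at a critical point $A$ is $\{Z\in\mathfrak{o}(n,\R):XAZ+ZXA=0\}$; since $XA$ is diagonal with one zero entry and the remaining entries distinct in absolute value, and a real skew-symmetric $Z$ has zero diagonal, the only solution is $Z=0$. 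So $h_X$ is Morse even though $XX^*$ is singular. The discreteness argument fails here for the same reason your Hessian argument would: $\mathfrak{o}(1,\R)=0$. By contrast, for $\K=\C$ or $\HH$ the top-left $n_0\times n_0$ block of the Hessian kernel is all of $\mathfrak{o}(n_0,\K)$ (skew-hermitian matrices may have nonzero diagonal), which is positive-dimensional once $n_0\geq1$, so both the discreteness and non-degeneracy arguments go through. The issue is therefore in the corollary's statement for $\K=\R$, not in your reasoning; you should state the real-case caveat explicitly rather than tuck it into a parenthetical.
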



\subsection{Gradient flow and local model of the critical point set}
Our next Proposition is a generalization of the same result for the classical Cayley transform $c_I$ by  Volchenko and Kozachko \cite{VOLKOZ}.  Following the terminology of these authors we shall call {\em linearization} the process of transforming the gradient flow of $h_X$   in $G$ to a flow in the Lie algebra. 

\begin{proposition}\label{FLUJO} Let $h_X$ be an arbitrary height function on $G=O(n,\K)$ and let $A$ be a critical point.
The solution to the gradient equation 
$$\alpha^\prime = \frac{1}{2}(X^*-\alpha X\alpha)$$ passing through $\alpha(0)\in \Omega_G(A)$
is the image by the generalized Cayley transform $c_{A^*}$
 of the curve in $T_{A^*}G$ defined as
 $$\beta(t)=\exp({-XAt/2})\cdot\beta_0\cdot\exp({-AXt/2}), \quad \beta_0=c_{A}(\alpha(0)).$$
\end{proposition}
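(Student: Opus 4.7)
The plan is to verify directly that $\alpha(t):=c_{A^*}(\beta(t))$ satisfies the gradient equation with the prescribed initial data. The initial condition $\alpha(0)=\alpha(0)$ is immediate from $c_{A^*}\circ c_{A}=\id$ in Proposition \ref{DEFIN}(4). Before differentiating, however, I would check that the curve $\beta(t)$ actually stays in $T_{A^*}G\subset\Omega(A^*)$, so that $c_{A^*}(\beta(t))$ is defined for all $t$. Because $A$ is a critical point of $h_X$ we have $X^*=AXA$; postmultiplying by $A^*$ gives $X^*A^*=AX$ and premultiplying by $A^*$ gives $A^*X^*=XA$, so the matrices $AX$ and $XA$ are exchanged by conjugation by $A$. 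Using $Ae^{-XAt/2}=e^{-AXt/2}A$ one then gets
\[
A\beta(t)+\beta(t)^*A^*=e^{-AXt/2}\bigl(A\beta_0+\beta_0^*A^*\bigr)e^{-AXt/2}=0,
\]
since $\beta_0=c_A(\alpha(0))\in T_{A^*}G$ by Theorem \ref{CONTRACTIBLE}.

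Next I would differentiate $\alpha=(I-A\beta)(A^*+\beta)^{-1}$ in the usual way, getting
\[
\alpha'=-A\beta'(A^*+\beta)^{-1}-\alpha\beta'(A^*+\beta)^{-1}=-(A+\alpha)\,\beta'\,(A^*+\beta)^{-1}.
\]
The decisive shortcut is Proposition \ref{DEFIN}(2), which (applied with $A^*$ in place of $A$) gives the inversion relation $(A^*+\beta)^{-1}=\tfrac{1}{2}(A+\alpha)$. Therefore
\[
\alpha'=-\tfrac{1}{2}(A+\alpha)\,\beta'\,(A+\alpha),
\]
and from the definition of $\beta$ we have $\beta'=-\tfrac{1}{2}(XA\beta+\beta AX)$.

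The remaining task is the algebraic identity
\[
(A+\alpha)(XA\beta+\beta AX)(A+\alpha)=2(X^*-\alpha X\alpha).
\]
Here I would eliminate $\beta$ using $\beta=2(A+\alpha)^{-1}-A^*$ (the same inversion relation), obtaining
\[
XA\beta+\beta AX=2XA(A+\alpha)^{-1}+2(A+\alpha)^{-1}AX-2X,
\]
where the $-2X$ comes from $XAA^*=X=A^*AX$. Sandwiching with $A+\alpha$ cancels the two inverses and leaves $2(A+\alpha)XA+2AX(A+\alpha)-2(A+\alpha)X(A+\alpha)$. Expanding and collecting, the terms $AX\alpha$ and $\alpha XA$ cancel, leaving $2AXA-2\alpha X\alpha$; the critical-point equation $AXA=X^*$ then gives the claim, and combining with $\alpha'=-\tfrac{1}{2}(A+\alpha)\beta'(A+\alpha)$ and $\beta'=-\tfrac{1}{2}(XA\beta+\beta AX)$ yields $\alpha'=\tfrac{1}{2}(X^*-\alpha X\alpha)$.

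The main obstacle is purely computational: the clean cancellation in the third step only materializes once Proposition \ref{DEFIN}(2) is invoked to replace $\beta$ by $(A+\alpha)^{-1}$, so that the calculation reduces to a polynomial identity in $A$, $\alpha$ and $X$ modulo the critical-point condition. Working directly with $\beta$ and its exponential factors makes the cancellation much harder to see.
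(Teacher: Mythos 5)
Your proof is correct and follows essentially the same route as the paper's: verify $\beta(t)\in T_{A^*}G$ via the hermitian/commutation properties coming from $X^*=AXA$, differentiate $\alpha=c_{A^*}(\beta)$, invoke Proposition~\ref{DEFIN}(2) to reduce to $\alpha'=-\tfrac12(A+\alpha)\beta'(A+\alpha)$, and then do the algebra; the only cosmetic difference is that you eliminate $\beta$ via $\beta=2(A+\alpha)^{-1}-A^*$ whereas the paper uses the equivalent identities $\beta(A+\alpha)=I-A^*\alpha$ and $(A+\alpha)\beta=I-\alpha A^*$. One minor imprecision: from $X^*A^*=AX$ and $A^*X^*=XA$ the operative conclusion is that $AX$ and $XA$ are \emph{hermitian} (so the exponentials are self-adjoint), not merely ``exchanged by conjugation by $A$''; you do in fact need the hermitian property when you take adjoints in the $\beta(t)^*A^*$ computation, so state it explicitly.
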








\begin{proof}
Notice that the matrices $XA$ and $AX$ are symmetric (hermitian) because 
$X^*=AXA$  and $A^*A=\id$. 

First, from $Ae^{XA}=e^{AX}A$ it follows that $A\beta+(A\beta)^*=0$, that is $\beta\in T_{A^*}G$.

Now, from the definition of $\beta$, it is
\begin{equation}\label{DERBETA}
\beta^\prime =
(-1/2)\left( XA\beta+\beta AX\right).
\end{equation}
Let
$$\alpha=c_{A^*}\circ\beta=(I-A\beta)(A^*+\beta)^{-1},$$
hence
$$\alpha(A^*+\beta)=I-A\beta.$$
Derivation gives
$$\alpha^\prime (A^*+\beta)+\alpha\beta^\prime=-A\beta^\prime$$
that is
$$\alpha^\prime (A^*+\beta)= -(A+\alpha)\beta^\prime.$$

By Proposition \ref{DEFIN}, the inverse of
$A^*+\beta =A^*+c_A(\alpha)$ is $(1/2)(A+\alpha)$, so
\begin{eqnarray}\label{UFF}
\alpha^\prime&=& \nonumber\\ 
(-1/2)(A+\alpha)\beta^\prime(A+\alpha)
&=&\\ \nonumber
(+1/4)(A+\alpha)(XA\beta+\beta AX)(A+\alpha).\nonumber
\end{eqnarray}
Now
$$\beta=c_A(\alpha)=(I-A^*\alpha)(A+\alpha)^{-1}=(A+\alpha)^{-1}(I-\alpha A^*)$$
implies that
$$\beta(A+\alpha)=I-A^*\alpha$$
and 
$$(A+\alpha)\beta=I-\alpha A^*$$
so from Equation (\ref{UFF})
\begin{eqnarray*}
4\alpha^\prime &=&\\(A+\alpha)XA(I-A^*\alpha)+(I-\alpha A^*)AX(A+\alpha)&=&\\
2AXA -2\alpha X \alpha&=&\\
2(X^*-\alpha X \alpha).
\end{eqnarray*}
\qed
\end{proof}


 \begin{remark}
 Indeed, when $X=A=I$, $\beta(t)=\exp({-t})\beta_0$ is the radial contraction to $\beta_0$.
\end{remark}

We now show how the Cayley transform serves to give a local chart for the set of critical points. This result is completely new.

\medskip
Let $h_X(A)=\Re\Tr(XA)$ be an arbitrary height function on the Lie group $G=O(n,\K)$. Let $\Sigma$ be the set of critical points of $h_X$. If $A\in\Sigma$  is a critical point we denote by $S(A)$ the real vector space
 $$S(A)=\{\beta_0\in T_{A^*}G \colon \,XA\beta_0+\beta_0AX=0\}.$$
 \begin{theorem}\label{MODEL}
 The Cayley map
$$c_{A^*}\colon S(A)\to \Sigma\cap\Omega_G(A)$$
is a diffeomorphism.
\end{theorem}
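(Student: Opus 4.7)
The plan is to reduce the statement to the algebraic characterisation of $S(A)$ that is already implicit in Proposition \ref{FLUJO}. By Theorem \ref{CONTRACTIBLE} applied with $A$ replaced by $A^*$, the map $c_{A^*}$ is already a global diffeomorphism from $T_{A^*}G$ onto $\Omega_G(A)$, sending $0$ to $A$. Since $S(A)$ is a real linear subspace of $T_{A^*}G$, it follows automatically that the restriction $c_{A^*}|_{S(A)}$ is a diffeomorphism onto its image. Hence the only content to verify is the set-theoretic equality $c_{A^*}(S(A))=\Sigma\cap\Omega_G(A)$, i.e.\ that for each $\beta_0\in T_{A^*}G$ the point $\alpha_0:=c_{A^*}(\beta_0)$ is a critical point of $h_X$ if and only if $\beta_0\in S(A)$.

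The key algebraic step is that, writing $M=XA/2$ and $N=AX/2$, the defining condition $XA\beta_0+\beta_0 AX=0$ of $S(A)$ is precisely $M\beta_0=-\beta_0 N$. Iterating gives $M^k\beta_0=(-1)^k\beta_0 N^k$ for every $k\ge 0$, so summing the power series yields $e^{-Mt}\beta_0=\beta_0\,e^{Nt}$. Inserting this in the curve $\beta(t)=e^{-Mt}\beta_0\,e^{-Nt}$ of Proposition \ref{FLUJO} produces $\beta(t)=\beta_0$ for all $t$. Conversely, if $\beta(t)\equiv\beta_0$, differentiating at $t=0$ gives $M\beta_0+\beta_0 N=0$, so $\beta_0\in S(A)$. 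Thus $\beta_0\in S(A)$ is equivalent to the curve $\beta(t)$ being constant.

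With this equivalence in hand I would invoke Proposition \ref{FLUJO} in both directions. If $\beta_0\in S(A)$, the curve $\alpha(t):=c_{A^*}(\beta(t))=c_{A^*}(\beta_0)=\alpha_0$ is constant, and by the proposition it solves the gradient equation $\alpha'=\frac{1}{2}(X^*-\alpha X\alpha)$; a constant solution must have $X^*=\alpha_0 X\alpha_0$, so $\alpha_0\in\Sigma$. Conversely, if $\alpha_0\in\Sigma$ then $\alpha(t)\equiv\alpha_0$ is the unique gradient flow through $\alpha_0$; Proposition \ref{FLUJO} expresses this same flow as $c_{A^*}(\beta(t))$, and the injectivity of $c_{A^*}$ forces $\beta(t)\equiv\beta_0$, which by the previous paragraph places $\beta_0$ in $S(A)$.

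I do not foresee a genuine obstruction, since every ingredient has been isolated in the earlier results. The only delicate point is the appeal to ODE uniqueness that identifies the constant curve at a critical point with the image under $c_{A^*}$ of the exponential curve $\beta(t)$; but this is standard once one observes (as is already done inside the proof of Proposition \ref{FLUJO}) that $\beta(t)$ stays in $T_{A^*}G$, so that $c_{A^*}(\beta(t))$ is defined for all $t$ and the composition $c_{A^*}\circ\beta$ is indeed a curve in $\Omega_G(A)$.
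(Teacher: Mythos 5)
Your proof is correct and follows essentially the same approach as the paper: reduce to the fact that the curve $\beta(t)$ of Proposition~\ref{FLUJO} is constant if and only if $\beta_0\in S(A)$, which is extracted from Equation~(\ref{DERBETA}). You merely spell out in more detail what the paper leaves terse (the explicit power-series computation $e^{-Mt}\beta_0=\beta_0 e^{Nt}$, the ODE-uniqueness step, and the observation that $c_{A^*}=c_A^{-1}$ restricted to a linear subspace is automatically a diffeomorphism onto its image).
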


\begin{proof}
We  need to prove that the curve $\beta(t)$ in Proposition \ref{FLUJO} is constant if and only if $\beta_0\in S(A)$. This can be achieved by using Equation (\ref{DERBETA}).
Then, by Proposition \ref{FLUJO}, $c_{A^*}(\beta_0)$ is a critical point if and only if $\beta^\prime(t)=0$ for all $t$, if and only if 
\begin{eqnarray*}
XA\beta_0+\beta_0AX=0.
\end{eqnarray*}
\qed\end{proof}

\begin{example} Suppose $X=I$ and $\K=\C$. Then the critical points of $h_I$ are the matrices $A\in U(n)$ such that $A^2=I$. Such a matrix $A=A^*$ can be diagonalized to $D=\diag(\varepsilon_1,\dots,\varepsilon_n)$, $\varepsilon_k=\pm 1$.
On the other hand, $\beta_0\in T_{A}G$ iff $A\beta_0$ is skew-symmetric, $A\beta_0=-\beta_0^*A$, while $\beta_0\in S(A)$ iff
$A\beta_0+\beta_0A=0$. It follows that $\beta_0=\beta_0^*$.

So, for instance, the identity $I$ and its opposite $-I$ are critical points that are isolated because  $S(\pm I)=0$. On the other hand, let 
$A=\diag(I_p,-I_q)$. Then $\beta_0\in T_AG$ must be of the form
$$\beta_0=\pmatrix{0&V^*\cr V&0\cr},$$
which implies $\dim S(A)=2pq$. This is in fact the dimension of the (critical) orbit of $A$, which is diffeomorphic to the Grasmannian $U(p+q)/(U(p)\times U(q))$. 
\end{example}

\begin{example} Let $X=\diag(q_1,\dots,q_n)$ be a diagonal matrix, with $q_k\neq 0$. Assume that $\vert q_1 \vert <\dots<\vert q_n\vert$. This time the gradient condition $A^*X^*=XA$ implies that a critical point has the form $$A=\diag(
\pm {\vert q_1 \vert /q_1},
\dots, \pm {\vert q_n \vert / q_n}
).$$
Since
$$XA=AX=\diag(\varepsilon_1\vert q_1\vert,\dots,\varepsilon_n\vert q_n\vert), \quad \varepsilon_k=\pm 1,$$
it follows that $S(A)=0$. So all critical points are isolated. \end{example}

\section{Applications to LS category}\label{APPLI}

\subsection{The unitary group $U(n)$}\label{UNITARY}
W.~Singhof proved in \cite{SINGHOF} that the LS category of the special unitary group $SU(n)$ is $n-1$, hence that of $U(n)\cong S^1\times SU(n)$ (diffeomorphism) is $n$. Although he obtained an explicit categorical covering by using the exponential map, his method has the inconvenience that a logarithm branch has to be chosen, hence introducing some technical complexities.
 
With the Cayley transform we obtain at each point $A\in G=U(n)$ a contractible open set $\Omega_G(A)$ diffeomorphic to the Lie algebra $\mathfrak{u}(n)$ (see Theorem \ref{CONTRACTIBLE}). It is then very easy to find an explicit categorical covering of the group by $n+1$ open sets. 

\begin{theorem}\label{CATUNITARY}
$\cat U(n)=n.$
\end{theorem}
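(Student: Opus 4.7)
The plan is to produce an explicit open cover of $U(n)$ by $n+1$ sets of the form $\Omega_G(A_j)$, each of which is categorical by Theorem~\ref{CONTRACTIBLE}, and then to match this upper bound with the standard cup-length lower bound.

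The key observation is the following characterization: if $A\in G=U(n)$, then $B\notin\Omega_G(A)$ iff $A+B$ is singular, iff there exists $v\neq 0$ with $A^*Bv=-v$, iff $-1$ is an eigenvalue of the unitary matrix $A^*B$. Specializing to $A=\zeta I$ for some $\zeta\in S^1$, this becomes: $B\notin\Omega_G(\zeta I)$ iff $-\zeta\in\mathrm{Spec}(B)$. Hence, to cover $U(n)$ it suffices to choose scalars so that every $B\in U(n)$ avoids at least one of the corresponding forbidden spectral values.

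Concretely, I would pick $n+1$ pairwise distinct points $\zeta_0,\dots,\zeta_n\in S^1$ (say, the $(n+1)$-th roots of unity) and set $A_j=\zeta_j I\in U(n)$. Since any $B\in U(n)$ has at most $n$ distinct eigenvalues on $S^1$, the pigeonhole principle forces $-\zeta_j\notin\mathrm{Spec}(B)$ for at least one $j$, so $B\in\Omega_G(A_j)$. Thus
\begin{equation*}
U(n)=\bigcup_{j=0}^{n}\Omega_G(\zeta_j I),
\end{equation*}
and each $\Omega_G(\zeta_j I)$ is diffeomorphic to the Lie algebra $\mathfrak{u}(n)$ by Theorem~\ref{CONTRACTIBLE}, hence contractible in $U(n)$. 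This yields the upper bound $\cat U(n)\le n$.

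For the matching lower bound I would invoke the classical cup-length argument: $H^*(U(n);\mathbb{Q})$ is an exterior algebra on generators in odd degrees $1,3,\dots,2n-1$, so the rational cup-length of $U(n)$ is $n$, forcing $\cat U(n)\ge n$. Combining the two bounds gives $\cat U(n)=n$. The interesting point of the argument is not the final numerical value, which was already known, but the fact that the covering is completely explicit and avoids Singhof's logarithm-branch selection; the only delicate step is the eigenvalue counting that ensures $n+1$ spectral obstructions cannot all be realized simultaneously, and this is immediate once the Cayley description of $\Omega_G(A)$ is in hand.
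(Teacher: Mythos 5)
Your argument is correct and is essentially identical to the paper's: the same characterization of $\Omega_G(\zeta I)$ via the spectrum of $B$, the same pigeonhole over $n+1$ scalar multiples of the identity to get the upper bound, and the same cup-length computation from $H^*(U(n))=\Lambda(x_1,x_3,\dots,x_{2n-1})$ for the lower bound. The only cosmetic difference is that the paper writes the centers as $A_k=-z_k\cdot\mathrm{id}$ while you absorb the sign into the choice of $\zeta_j$.
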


\begin{proof} 

Let $X\in U(n)$ be a unitary matrix and  let $z\cdot\id\in U(n)$ be the diagonal matrix $\diag(z,\dots,z)$, where $z\in\C$ is any complex number with $\vert z \vert=1$. Remember that $X\in \Omega(z\cdot\id)$ iff  the matrix $z\cdot\id+X$ is invertible. Let  $\lambda_1,\ldots,\lambda_n$ be the eigenvalues of $X$, then after diagonalizing we obtain:
$$z\cdot\id+X=U\diag(z+\lambda_1,\ldots,z+\lambda_n)U^*, \quad U\in U(n),$$
meaning that $X\in\Omega(z\cdot\id)$ iff $\lambda_i\neq -z, \forall i=1,\ldots,n.$

Let us take $n+1$ different complex numbers $z_0,\dots,z_n,$ with $\vert z_k\vert=1.$ Let  $A_k=-z_k \cdot \id$. Since any matrix $X\in U(n)$  has at most $n$ different eigenvalues   
 there is always some $-z_k$ which is not an eigenvalue of $X$, that is  $X\in \Omega(A_k)$. This proves that $U(n)=\cup^n_{k=0}{\Omega_G(A_k)}$,  
hence
$\cat U(n)\leq n$. On the other hand, as it is well known \cite{JAMES}, a lower bound for the LS category  is  given by the length of the cup product.
The cohomology of the unitary group being 
$$H(U(n))=\Lambda( x_1,x_3,\ldots, x_{2n-1})$$ (see 
\cite[p.~273]{TANRE}), the longest non-null cup product  is $x_1\wedge x_3\wedge\cdots\wedge x_{2n-1}\in H^{n^2}$, hence $n=\lcp \leq \cat U(n).$ Equality follows.
\qed\end{proof} 

\subsection{The symmetric spaces $U(2n)/Sp(n)$ and $U(n)/O(n)$}\label{SYMMETRIC}

\begin{theorem} $\cat U(2n)/Sp(n)=n.$
\end{theorem}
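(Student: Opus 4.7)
The strategy is to mirror the argument of Theorem \ref{CATUNITARY}, by realizing $U(2n)/Sp(n)$ as a submanifold of $U(2n)$ and restricting the generalized Cayley covering. Set $J=\pmatrix{0&I_n\cr -I_n&0\cr}$ and $\sigma(A)=J\bar AJ^{-1}$, an involutive automorphism of $U(2n)$ whose fixed-point set is $Sp(n)$. The Cartan map $gSp(n)\mapsto g\sigma(g)^{-1}$ identifies $U(2n)/Sp(n)$ diffeomorphically with the totally geodesic submanifold
$$M=\{A\in U(2n)\,:\,\sigma(A)=A^{-1}\}=\{A\in U(2n)\,:\,A=J\,\overline{A^{-1}}\,J^{-1}\}.$$

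The first key step is an eigenvalue lemma: every $A\in M$ has at most $n$ distinct complex eigenvalues. Indeed, if $Av=\lambda v$ with $|\lambda|=1$, the second form of the defining equation gives directly $A(J\bar v)=\lambda (J\bar v)$; moreover $J\bar v=cv$ is ruled out, since it would force $|c|^2=-1$. Hence every eigenspace is even-dimensional, and $A$ has at most $n$ distinct eigenvalues.

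Next I would pick $n+1$ distinct unit complex numbers $z_0,\ldots,z_n$ and set $A_k=-z_k I\in M$ (which holds because $\sigma(zI)=\bar zI=(zI)^{-1}$ when $|z|=1$). By the eigenvalue bound, for every $X\in M$ at least one $z_k$ fails to be an eigenvalue of $X$, so $A_k+X$ is invertible and $X\in\Omega_G(A_k)$. This yields the covering $M=\bigcup_{k=0}^n\bigl(\Omega_G(A_k)\cap M\bigr)$. Contractibility of each piece follows from the $\sigma$-equivariance $\sigma(c_A(X))=c_{\sigma(A)}(\sigma(X))$, a direct calculation from Propositions \ref{DEFIN} and \ref{PROP2}(2); combined with Proposition \ref{PROP2}(3), it implies that $c_{A_k}$ restricts to a diffeomorphism between $\Omega_G(A_k)\cap M$ and the real linear subspace $T_{A_k^*}M\subset T_{A_k^*}U(2n)$. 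Hence $\cat U(2n)/Sp(n)\leq n$.

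For the lower bound, as in Theorem \ref{CATUNITARY}, one uses the cup length. The rational cohomology of $U(2n)/Sp(n)$ is an exterior algebra on $n$ generators of odd degrees $1,5,\ldots,4n-3$, whose top product is nonzero, so $\lcp\geq n$ and equality follows. The main obstacle is showing that the Cayley image equals the \emph{full} linear space $T_{A_k^*}M$: the inclusion $\subseteq$ is direct from $\sigma$-equivariance, while the reverse inclusion requires verifying that $c_{A_k^*}(Y)\in M$ for any $Y\in T_{A_k^*}M$, which reduces to another application of Proposition \ref{PROP2}(3).
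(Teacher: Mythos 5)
Your proof is correct and follows essentially the same strategy as the paper: realize $U(2n)/Sp(n)$ as a submanifold of $U(2n)$ cut out by a $J$-twisted transpose condition, show eigenspaces of matrices there have complex dimension at least two (hence at most $n$ distinct eigenvalues), cover by the $n+1$ Cayley domains $\Omega_G(-z_kI)$, check that the Cayley contraction preserves the submanifold, and finish with the cup-length lower bound. The paper phrases the model via the action $U\cdot X=UXU^T$ on skew-symmetric unitary matrices followed by multiplication by $J$, and verifies invariance of the Cayley contraction by the two explicit identities $c_{zI}(Y^T)=-c_{zI}(Y)^T$ and $c_{zJ}(-JYJ)=-Jc_{zI}(Y)J$; you phrase the same thing via the Cartan embedding for the involution $\sigma(A)=J\bar A J^{-1}$ and $\sigma$-equivariance of the Cayley transform. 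One small point: ruling out $J\bar v=cv$ only shows each eigenspace has dimension at least $2$, not that it is even-dimensional (that would need the conjugate-linear map $v\mapsto J\bar v$ squaring to $-\mathrm{id}$ to give a quaternionic structure); but dimension at least $2$ already yields the bound of at most $n$ distinct eigenvalues, which is all you use, so the argument stands.
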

The following proof is also a simplification of the original one \cite{MIMURASUGATA}.

\begin{proof}
Following Mimura and Sugata, we consider the action $U\cdot X= UXU^T$ of $U(2n)$ on the manifold
$$M=\{X\in U(2n)\colon X+X^T=0\}.$$
This action turns out to be transitive with isotropy $Sp(n)$. Then $U(2n)/Sp(n)\cong M$. Here the symplectic group is identified (via complexification) with the subgroup of matrices $U\in U(2n)$ such that $UJU^T=J$, where
 $$J=
 \pmatrix{
  0 & -I_n \cr
  I_n&0\cr
}.
 $$
 
Now, consider the manifold 
$$M^\prime=JM=\{Y\in U(2n)\colon Y^T=-JYJ\}$$
which is diffeomorphic to $M$ because $J^2=-I$.

Let $Y\in M^\prime$, let $\lambda\in\C$ with $\vert \lambda \vert=1$. Then $Y\in\Omega(\lambda I)$ if and only if   $-\lambda$ is an eigenvalue of $Y$. But as remarked in \cite{MIMURASUGATA}, $Yv=-v\lambda$ implies $Y(J\bar v)=-(J\bar v)\lambda$, which means that  $Y$ can be diagonalized as
$$Y=U\pmatrix{D&0\cr 0&D\cr}U^*, \quad D=\diag(\lambda_1,\dots,\lambda_n),$$
so
$$Y+\lambda I = U\pmatrix{D+\lambda I&0\cr 0&D+\lambda I\cr}U^*,$$
showing that the maximum number of different eigenvalues of $Y$ is $n$.  This implies that when taking $n+1$ different complex numbers $z_0,\dots,z_n$, with $\vert z_k\vert =1$,  the open sets $\Omega(-z_k\, I)$ will cover $M^\prime$. 

It only remains to show that the  Cayley contraction remains inside $M^\prime$. More explicitly, for any $z\in\C$, $\vert z \vert=1$, let the Cayley map be
$$c_{z\,I}\colon \Omega (z\, I) \to T_{\bar z\, I}G,$$ 
let $Y\in \Omega(z\,I)\cap M^\prime$ and take the radial contraction $tc_{z\,I}(Y)$, $t\in[0,1]$. In the same way of Proposition \ref{PROP2} it is  easy to prove that 
$$c_{z\,I}(Y^T)=-c_{z\,I}(Y)^T$$ 
and that 
$$c_{z\,J}(-JYJ)=-Jc_{z\,I}(Y)J.$$
This implies that the image $c_{\bar z\,I}(tc_{z\,I}(Y)$ of the contraction by the inverse Cayley map is contained in $M^\prime$. 

Hence $\cat U(2n)/Sp(n)\leq n$.
On the other hand (\cite[p.~149]{MIMURATODA}), $$H(U(2n)/Sp(n))=\Lambda(x_1,x_5,x_9,\dots,x_{4n-3})$$ so $n=\lcp\leq \cat U(2n)/Sp(n)$. Equality follows.
\qed\end{proof}

\begin{theorem}  $\cat U(n)/O(n)=n.$ 
\end{theorem}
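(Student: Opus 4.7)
The plan is to follow the same strategy used for $U(2n)/Sp(n)$ in the previous theorem, simply swapping the skew-symmetric model for the symmetric one. First, I would realize the symmetric space as the manifold
$$M=\{X\in U(n) \colon X^T=X\}$$
of symmetric unitary matrices, on which $U(n)$ acts transitively by $U\cdot X=UXU^T$; the isotropy at $I$ is $O(n)$, giving a $U(n)$-equivariant diffeomorphism $U(n)/O(n)\cong M$.

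To produce the upper bound $\cat M\leq n$, I would pick $n+1$ pairwise distinct unit complex numbers $z_0,\dots,z_n$ and cover $M$ by the open sets $\Omega(z_k I)\cap M$. Since every $X\in M$ is unitary, and hence normal with at most $n$ eigenvalues on $S^1$, for each $X$ at least one of the values $-z_k$ fails to be an eigenvalue, so $z_kI+X$ is invertible and $X\in\Omega(z_k I)$.

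The main step---and the only one requiring real care---is to show that each $\Omega(z_k I)\cap M$ is contractible \emph{inside} $M$, not merely inside $U(n)$. The scalar matrix $z_k I$ already lies in $M$, and Theorem \ref{CONTRACTIBLE} provides a radial contraction $H(X,t)=c_{\bar z_k I}(t\cdot c_{z_k I}(X))$ of $\Omega_{U(n)}(z_k I)$ to $z_k I$ through $U(n)$. To check that $H(X,t)\in M$ for all $t$, I would verify the symmetry-preservation identity $c_{zI}(X^T)=c_{zI}(X)^T$; this follows from the two equivalent expressions for $c_A$ in Proposition \ref{DEFIN}(1), exploiting that $zI$ is central, and is completely analogous to part (1) of Proposition \ref{PROP2}. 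Once this identity is in hand, $X=X^T$ forces $c_{z_k I}(X)$ to be symmetric, the linear scaling $t\cdot c_{z_k I}(X)$ preserves symmetry, and applying $c_{\bar z_k I}$ returns a symmetric matrix by the same identity; so the whole path stays in $M$ and contracts $\Omega(z_k I)\cap M$ to $z_k I$.

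For the lower bound I would invoke cup length: the mod-$2$ cohomology of $U(n)/O(n)$ is the exterior algebra $\Lambda(x_1,x_2,\dots,x_n)$ with $\deg x_i=i$ (see Mimura-Toda), so the top product $x_1 x_2\cdots x_n$ is nonzero and $n=\lcp\leq\cat U(n)/O(n)$. Combined with the upper bound this gives equality. The only anticipated obstacle, the symmetry-preservation of $c_{zI}$, is a short algebraic check; everything else is a routine transcription of the $U(2n)/Sp(n)$ argument.
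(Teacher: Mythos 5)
Your proof is correct and is precisely the fleshing-out of the proof the paper leaves implicit ("completely analogous to the preceding one"): model $M=\{X\in U(n):X^T=X\}$, cover by $\Omega(z_kI)\cap M$ using $n+1$ distinct unit scalars, show the Cayley contraction preserves symmetry via $c_{zI}(X^T)=c_{zI}(X)^T$, and close with the mod-$2$ cup-length bound $n$ from $H^*(U(n)/O(n);\mathbb{Z}/2)=\Lambda(x_1,\dots,x_n)$. You also correctly note that this case is in fact slightly simpler than $U(2n)/Sp(n)$: since $I\in M$ there is no need for the $J$-conjugation detour, and the $n\times n$ size trivially bounds the eigenvalue count without any pairing argument.
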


The proof  is completely analogous to the preceding one.

\subsection{The symplectic group $ Sp(2)$}\label{SP2}
The LS category of $G= Sp(2)$ was computed for the first time by P.~Schweitzer \cite{PAUL}, who proved that $\cat{Sp(2)}=3$.

 By using the Morse theory explained above we shall easily obtain an explicit covering by four categorical open sets. 

Let us consider the four critical points of a height  function $h_X$ as in Example \ref{ptos criticos diagonales} of Section \ref{MORSE}, namely: the identity $I=\diag(1,1)$, $P=\diag(-1,1)$, $-P$ and $-I$.
\begin{theorem}\label{scat de Sp2}
$ \{\pm\Omega_G(I), \pm\Omega_G( P)\}$ is a categorical covering of $G= Sp(2)$.
\end{theorem}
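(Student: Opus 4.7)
The plan is to check the two defining requirements of a categorical covering separately: that each of the four open sets is contractible in $G$, and that their union is all of $Sp(2)$. The contractibility is immediate from Theorem~\ref{CONTRACTIBLE}, which identifies each $\Omega_G(A)$ with the vector space $T_{A^*}G$ via the generalized Cayley transform. So all the work lies in establishing the covering identity $Sp(2)=\Omega_G(I)\cup\Omega_G(-I)\cup\Omega_G(P)\cup\Omega_G(-P)$.

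My strategy is to characterize the complements via the complex diagonalization of symplectic matrices recalled in the remark of Section~\ref{CAYLEYTRANSFORM}: any $X\in Sp(2)$ can be written $X=UDU^*$ with $U\in Sp(2)$ and $D=\diag(\lambda_1,\lambda_2)$, $\lambda_i\in S^1\subset\C$. Since $\pm I+X=U(\pm I+D)U^*$ is conjugate to a complex diagonal matrix, $I+X$ is singular iff some $\lambda_i=-1$, while $-I+X$ is singular iff some $\lambda_i=+1$. Hence any $X$ lying outside $\Omega_G(I)\cup\Omega_G(-I)$ must have spectrum exactly $\{-1,+1\}$, and so is symplectically conjugate to $P$: $X=UPU^*$ with $U=\pmatrix{a & b \cr c & d}\in Sp(2)$.

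The heart of the argument is then to show that such an $X$ always lies in $\Omega_G(P)\cup\Omega_G(-P)$. I would work with the right $\HH$-basis of eigenvectors $u_1=(a,c)^T$ and $u_2=(b,d)^T$ of $X$ for the right eigenvalues $-1$ and $+1$, and expand an arbitrary $v=u_1\alpha+u_2\beta$ with $\alpha,\beta\in\HH$. A direct computation of $(P+X)v$ and $(-P+X)v$ in this basis reduces the two equations to the quaternionic systems $\{a\alpha=0,\,d\beta=0\}$ and $\{b\beta=0,\,c\alpha=0\}$ respectively. Consequently $P+X$ is singular iff $a=0$ or $d=0$, while $-P+X$ is singular iff $b=0$ or $c=0$. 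If both were simultaneously singular one of $\{a=b=0\}$, $\{a=c=0\}$, $\{d=b=0\}$, $\{d=c=0\}$ would hold, that is $U$ would have a zero row or a zero column, contradicting its invertibility. This gives the covering.

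The main obstacle I foresee is bookkeeping rather than anything deep: the noncommutativity of $\HH$ forces a careful distinction between left and right eigenvectors, and the whole calculation must be carried out in the right $\HH$-module structure so that the scalar factors $\alpha,\beta$ interact cleanly with the diagonalization $X=UDU^*$. Once these conventions are fixed, the reduction to the four-case contradiction is essentially immediate, and one gets an explicit cover of $Sp(2)$ by four contractible open sets, recovering $\cat Sp(2)\le 3$.
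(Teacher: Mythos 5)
Your proposal is correct, and it takes a genuinely different route to the covering identity than the paper's. Both arguments begin by diagonalizing to conclude that the complement of $\Omega_G(I)\cup\Omega_G(-I)$ is exactly the adjoint orbit of $P$, but they diverge on how to show this orbit is swallowed by $\Omega_G(P)\cup\Omega_G(-P)$. The paper works directly with the entries $x_{ij}$ of $X$: it computes $(P+X)^2=2\diag(1-x_{11},\,1+x_{22})$, so $P+X$ fails to be invertible precisely when $x_{11}=1$ or $x_{22}=-1$; unitarity of $X$ then forces the off-diagonal entries to vanish, and membership in the orbit of $P$ pins $X$ down to $-P$, which lies in $\Omega_G(-P)$. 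You instead keep the conjugating matrix $U$ in play, expand a putative kernel vector of $\pm P+X$ in the right-$\HH$-eigenbasis $u_1=(a,c)^T$, $u_2=(b,d)^T$, and reduce singularity of $P+X$ to $a=0$ or $d=0$ and singularity of $-P+X$ to $b=0$ or $c=0$; simultaneous failure would force a zero row or column in $U$, contradicting invertibility. Your version has the merit of treating $P+X$ and $-P+X$ symmetrically from the start and of making the obstruction visible in $U$ rather than in $X$; the paper's version is slightly shorter because the identity $(P+X)^2=2\diag(1-x_{11},1+x_{22})$ lets it bypass any discussion of eigenvectors. Both are valid elementary proofs, and both are missing from your write-up only the (trivial, and implicitly claimed) observation that each $\Omega_G(A)$ is not just contractible but contractible \emph{in} $G$, which is what Theorem~\ref{CONTRACTIBLE} actually delivers.
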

\begin{proof} 
Remember that $X\in\Omega_G(A)$ means that $A+X$ is invertible.
First we see that the open sets $\Omega_G(I)$ and $\Omega_G(-I)$  cover   the whole  group excepting the orbit $UPU^*$  of the  matrix $P$ by the adjoint action. In fact, by diagonalization, this orbit is formed by the matrices having $1$ and $-1$ as eigenvalues and is diffeomorphic to the sphere $S^4$, so it can be covered by stereographic projection.
 
Explicitly, we must prove that given $X=(x_{ij})=UPU^*$ in the orbit of $P$, either $P+X$ or 
$-P+X$ is invertible. 

Since $P^2=X^2=I$ we have 
 $$(P+X)^2=2 I+PX+XP=2\diag(1-x_{11},1+x_{22}).$$

Then $P+X$ is invertible if and only if  $x_{11}\neq 1$ and $x_{22}\neq-1$. Suppose $x_{11}=1$. The condition $X^*X=I$ means that the columns of $X$ form an orthonormal basis of $\HH^2$ for the hermitian product $\langle v,w\rangle=v^*w$.
 Then, $x_{11}=1$ implies $x_{12}=x_{21}=0$. But since $X$ is in the orbit of $P$, it must be $x_{22}=-1$. Hence $X=-P$. The same conclusion is obtained from  $x_{22}=-1$. So in fact $\Omega_G(P)$ covers all the orbit of $P$, excepting $-P$. Since $-P	\in\Omega_G(-P)$, the proof is done.
\qed\end{proof} 

\begin{remark} The cohomology of the symplectic group   is  
\cite[p.~119]{MIMURATODA}
$$H( Sp(n))=\Lambda (x_3,x_7,\ldots,x_{4n-1})$$
so the longest non-null product  is $x_3\wedge x_7\wedge \cdots \wedge x_{4n-1}$
and $\lcp Sp(n)=n$. However Schweitzer \cite{PAUL} was able to prove that $\cat  Sp(n)\geq n+1$ for $n\geq 2.$
\end{remark}

\subsection{Left eigenvalues of symplectic matrices}\label{IZQUIERDA}
Obviously it is not worthy to apply the method of critical points  above to the symplectic group $ Sp(n)$, $n>2$ (for instance it is known that $\cat{Sp(3)=5}$ \cite{TATO}). Instead, in this section we discuss the possibility of extending the eigenvalue method of paragraphs \ref{UNITARY} and \ref{SYMMETRIC} to the symplectic setting.

We shall briefly explain the underlying difficulties.
First, it is necessary to endow the quaternionic space $\HH^n$ with the structure of a {\em right} $\HH$-vector space, in order to obtain the usual results for the matrix associated to a linear map. Second, the theory of right eigenvalues is well established, including diagonalization of symplectic matrices \cite{BAKER,BRENNER,FAREPIDK,ZHANG1}. However, for a matrix $A\in  Sp(n)$ and a quaternion $\sigma\in\HH$, the condition that $A-\sigma I$ be invertible is {\em not} related to $\sigma$ being a right eigenvalue. Instead we must consider  {\em left} eigenvalues.

\begin{definition} A quaternion $\sigma\in\HH$ is a {\em left eigenvalue} of the matrix $A\in \mathcal{M}(n,\HH)$ if and only if there exists $v\in \HH^n$, $v\neq 0$, such that $Av=\sigma v$.
\end{definition}

Unfortunately, very little is known about left eigenvalues of quaternionic matrices. Their existence, number, and methods for computing them are only partially understood, see Zhang's paper \cite{ZHANG2} for a recent review.   For $n=2$ two of the authors were able to prove the following Theorem, based on previous results by Huang and So \cite{HS}.

\begin{theorem}[\cite{MP}]\label{INFINITE} A symplectic matrix $A\in  Sp(2)$ has either one, two or infinite left eigenvalues. The latter case can only occur when
$$A=L_q\circ R_\theta=\left(\matrix{
q\cos\theta & -q\sin\theta\cr
q\sin\theta & q\cos\theta\cr
}\right), \quad q\in\HH, \vert q \vert =1, \theta\in\R, \sin\theta\neq 0.$$
\end{theorem}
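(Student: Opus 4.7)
My plan is to establish the two directions of the equivalence separately. For the easy direction, assume $A=L_q\circ R_\theta$ with $\sin\theta\neq 0$ and substitute into the eigenvalue equation $Av=\sigma v$. Because $\sin\theta\neq 0$, the first coordinate of $v$ cannot vanish, so we may normalise $v=(1,r)$. The two scalar equations then collapse to $\sigma=q(\cos\theta-r\sin\theta)$ together with $r^2=-1$. The latter is satisfied by every imaginary unit quaternion, i.e.\ an entire $2$-sphere in $\HH$, and the corresponding values of $\sigma$ are visibly distinct; so there are infinitely many left eigenvalues.

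For the converse, write $A=\pmatrix{a&b\cr c&d\cr}$. The symplectic relation $\bar a b+\bar c d=0$ (coming from $A^*A=I$), together with $|a|=|d|$ and $|b|=|c|$, forces $b=0$ if and only if $c=0$; in the diagonal case $A=\diag(a,d)$ admits only the two (possibly coincident) left eigenvalues $a$ and $d$, contradicting the hypothesis. So $b,c\neq 0$. Searching for eigenvectors of the form $v=(1,r)$, the equation $Av=\sigma v$ reduces to
$$\sigma=a+br,\qquad br^2+(a-d)r-c=0,$$
equivalently $r^2+\alpha r=\gamma$ with $\alpha=b^{-1}(a-d)$ and $\gamma=b^{-1}c$.

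Next I would run a rigidity analysis of this quaternion quadratic, in the spirit of Huang and So. If $r_1\neq r_2$ are both solutions and $h=r_2-r_1$, subtracting the two equations yields $hr_2+(r_1+\alpha)h=0$, so $r_2=-h^{-1}(r_1+\alpha)h$. Thus every further solution is quaternionically conjugate to $-(r_1+\alpha)$ and therefore has the same real part and the same norm as it. An infinite family of solutions is then forced to fill an entire $2$-sphere $\{r\in\HH:\Re(r)=s_0,\;|r|^2=c_0\}$. Plugging such a full sphere back into $r^2+\alpha r=\gamma$ and separating real and imaginary parts makes the linear term collapse: $\alpha=0$ (so $a=d$) and $\gamma$ must be a strictly negative real number, giving $c=\gamma b$ with $\gamma\in\R_{<0}$.

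The remaining symplectic relations then pin down the form of $A$. Comparing $|a|^2+|b|^2=1$ with $|d|^2+|c|^2=|a|^2+\gamma^2|b|^2=1$ forces $\gamma^2=1$, so $\gamma=-1$ and $c=-b$. The last condition $\bar a b+\bar c d=0$ becomes $\bar a b=\bar b a$, i.e.\ $\bar a b\in\R$, which means that $a$ and $b$ are $\R$-proportional as quaternions. Writing $a=q\cos\theta$ with $q\in\HH$ a unit quaternion and $|a|=|\cos\theta|$, one concludes $b=-q\sin\theta$, and together with $c=-b$ and $d=a$ this is exactly $A=L_q\circ R_\theta$; the hypothesis $b\neq 0$ gives $\sin\theta\neq 0$. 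The main obstacle in this plan is the rigidity step for the quaternion quadratic: establishing that infinitely many solutions of $r^2+\alpha r=\gamma$ must fill a $2$-sphere, and that this in turn forces $\alpha=0$ and $\gamma<0$. Once those algebraic facts are secured, the symplectic relations deliver the conclusion almost mechanically.
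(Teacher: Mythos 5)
The paper does not prove this theorem: it is quoted verbatim from the cited reference \cite{MP}, so there is no in-text argument of the authors' own to compare yours against. Your architecture is the right one — reduce via $v=(1,r)$ (after noting $v_1=0$ would force $bv_2=0$, impossible since $b\neq 0$) to the quaternion quadratic $r^2+\alpha r=\gamma$, apply the Huang--So conjugacy trick, and then feed the coefficient constraints back into the orthogonality relations of $Sp(2)$ — but there is a concrete error in the middle. Plugging a full sphere $\{s_0+u : u\ \mathrm{pure\ imaginary},\ |u|=\rho\}$ into $r^2+\alpha r=\gamma$ does \emph{not} make the linear term vanish: separating real and imaginary parts only gives $\alpha\in\R$ (the real part contains $-\langle\Im\alpha,u\rangle$, which must be constant on the sphere), $\gamma\in\R$, and the centring relation $2s_0+\alpha=0$. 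For instance $r^2+2r=-2$ has the full sphere $\{-1+u:u^2=-1\}$ of solutions yet $\alpha=2\neq 0$. The vanishing of $\alpha$, hence $a=d$, is an additional consequence of orthogonality, not of the quadratic: with $\alpha\in\R$, $\gamma=-1$, $c=-b$ and $a-d=b\alpha$, the relation $\bar a b+\bar c d=0$ becomes $\bar a b-\bar b a=-|b|^2\alpha$, whose left side is purely imaginary and whose right side is real, forcing both to vanish and giving $\alpha=0$ and $\bar a b\in\R$ simultaneously. Your final step needs this order of deduction, not the other way around.

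The rigidity step you flag as the main obstacle can be closed without appealing to a full sphere a priori. Take any three distinct solutions; by the conjugacy relation (applied symmetrically) they all share the same real part $s=-\Re(\alpha)/2$ and the same norm, so writing $r_i=s+u_i$ with $u_i$ purely imaginary of fixed modulus and subtracting two instances of the equation (using $u_1^2=u_2^2$) yields $(2s+\alpha)(u_1-u_2)=0$. Since $u_1\neq u_2$ and $\HH$ is a division ring, $\alpha=-2s\in\R$; substituting back shows $\gamma\in\R$, and then every $r=s+u$ with $|u|^2=-s^2-\gamma$ solves the equation, giving the whole sphere and, incidentally, the ``one, two or infinitely many'' trichotomy. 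With these two corrections your proposal becomes a correct proof.
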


As a corollary it is possible to obtain the following deceptive result.

\begin{corollary}[\cite{MP3}]\label{CUATRONO} Let $\sigma_0,\dots,\sigma_3$ be four  arbitrary quaternions of module $1$. Then the categorical open sets 
$\Omega_G(\sigma_0\cdot I),\dots,\Omega_G(\sigma_3\cdot I)$ 
do {\em not} cover the group 
$G=\mathit{Sp(2)}$.
\end{corollary}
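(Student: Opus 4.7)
The aim is to exhibit a single matrix $X \in Sp(2)$ that lies outside all four sets $\Omega_G(\sigma_k\cdot I)$ simultaneously. By definition, $X \notin \Omega_G(\sigma_k\cdot I)$ means that $\sigma_k I + X$ is singular, which is equivalent to $-\sigma_k$ being a \emph{left} eigenvalue of $X$. So it suffices to produce a symplectic matrix having $-\sigma_0, -\sigma_1, -\sigma_2, -\sigma_3$ among its left eigenvalues.

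By Theorem~\ref{INFINITE}, any element of $Sp(2)$ has at most two left eigenvalues unless it belongs to the exceptional family $X = L_q\circ R_\theta$ with $|q|=1$ and $\sin\theta\neq 0$. The first step is to compute the left spectrum of such a matrix explicitly. Imposing $Xv=\sigma v$ with $v=(1,\alpha)^{T}$ and comparing the two component equations reduces, after cancelling the non-zero factor $q\sin\theta$, to the single condition $\alpha^{2}=-1$, i.e., $\alpha$ is an arbitrary pure unit quaternion, and $\sigma = q(\cos\theta-\sin\theta\,\alpha)$. Rewriting this in terms of the real inner product $\langle u,v\rangle = \Re(\bar u v)$ on $\HH\cong \R^{4}$, the set of left eigenvalues is the round $2$-sphere
$$\Lambda(q,\theta) = \{\,\sigma\in S^{3}\subset\HH : \langle q,\sigma\rangle = \cos\theta\,\},$$
that is, the intersection of $S^{3}$ with an affine hyperplane of $\R^{4}$.

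The remaining task is to find $(q,\theta)$ such that each $-\sigma_k$ lies on $\Lambda(q,\theta)$, i.e.,
$$\langle q,\sigma_{k}\rangle = -\cos\theta,\qquad k=0,1,2,3.$$
Geometrically this asks whether the four unit vectors $\sigma_0,\dots,\sigma_3$ share a common affine hyperplane of $\R^4$. They always do, because four points in $\R^4$ span an affine subspace of dimension at most three. Since the four points lie on $S^{3}$, the hyperplane constant $c$ automatically satisfies $|c|\leq 1$, and equality can occur only when the hyperplane is tangent to $S^{3}$, which forces all four $\sigma_{k}$ to coincide with the point of tangency. In the generic case $|c|<1$ one picks $\theta\in(0,\pi)$ with $\cos\theta=-c$, and the matrix $X=L_q\circ R_\theta$ witnesses the non-covering. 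The degenerate case $\sigma_0=\cdots=\sigma_3$ is trivial: a single contractible open set cannot cover the non-contractible group $Sp(2)$.

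The main obstacle is the explicit determination of $\Lambda(q,\theta)$: because of quaternionic non-commutativity one has to handle carefully the order of multiplications (the scalar $\sigma$ acts on the left of $v$ while $q$ sits inside the matrix entries), and it is precisely this non-commutativity which collapses the eigenvalue equations to $\alpha^{2}=-1$ and produces the $2$-sphere of left eigenvalues. Once this picture is established, the conclusion is a direct piece of Euclidean geometry in $\R^{4}$.
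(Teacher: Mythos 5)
The paper itself cites this corollary to the preprint \cite{MP3} and gives no proof, so there is nothing in the text to compare against line by line. Your argument, however, is correct and is exactly what the surrounding machinery of \S\ref{IZQUIERDA} invites. The translation $X\notin\Omega_G(\sigma_k I)\iff -\sigma_k$ is a left eigenvalue of $X$ is right; the computation with $v=(1,\alpha)^T$ correctly reduces the eigenvector equation for $L_q\circ R_\theta$ to $\alpha^2=-1$ (and the possibility of a vanishing first component is excluded because $q\sin\theta\alpha=0$ forces $\alpha=0$), giving $\sigma=q(\cos\theta-\sin\theta\,\alpha)$ and hence the $2$-sphere $\Lambda(q,\theta)=\{\sigma\in S^3:\Re(\bar q\sigma)=\cos\theta\}$. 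The affine-hyperplane step is sound: four points of $\R^4$ lie in some hyperplane $\{\langle q,\cdot\rangle=c\}$ with $|q|=1$, and Cauchy--Schwarz gives $|c|\le 1$. One small sharpening is worth making explicit: if the $\sigma_k$ are not all equal, then \emph{every} hyperplane through them satisfies $|c|<1$, since it contains the midpoint of a chord of $S^3$, which lies strictly inside the ball; so there is no ``bad choice'' of hyperplane to worry about when the affine span is degenerate. In the remaining case $\sigma_0=\cdots=\sigma_3$ the single matrix $-\sigma_0 I\in Sp(2)$ already misses $\Omega_G(\sigma_0 I)$, which is even more elementary than appealing to non-contractibility (though that also works). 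With those remarks, the proof is complete and self-contained.
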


%
%

\begin{acknowledgements}
We acknowledge many useful discussions with D. Tanr\'e.
\end{acknowledgements}



\end{document}